\documentclass[11pt]{amsproc}

 
\usepackage[top=1in,bottom=1in,left=1.5in,right=1.5in]{geometry}
\usepackage{amsfonts} 
\usepackage{amsmath}
\usepackage{amssymb}
\usepackage{amsthm}
\usepackage{setspace}
\usepackage{subfigure}
\usepackage{url}
\usepackage{booktabs}
\usepackage{ifthen}
\usepackage{tikz}
\usetikzlibrary{calc}
\usepackage{enumerate}




\newcommand{\A}{\mathcal{A}}

\newcommand{\D}{\mathcal{D}}


\newtheorem{thm}{Theorem}[section]
\newtheorem{lem}[thm]{Lemma}
\newtheorem{prop}[thm]{Proposition}
\newtheorem{cor}[thm]{Corollary}

\newtheorem{defn}[thm]{Definition}


\numberwithin{equation}{section}


\setlength{\heavyrulewidth}{0.1em}

 \begin{document}

\title{A class of graphs approaching Vizing's conjecture}

\author{Aziz Contractor}
\address{Aziz Contractor (\tt acontractor@student.clayton.edu)}

\author{Elliot Krop}
\address{Elliot Krop (\tt elliotkrop@clayton.edu)}
\address{Department of Mathematics, Clayton State University}
\date{\today}

\maketitle

\begin {abstract}
For any graph $G=(V,E)$, a subset $S\subseteq V$ \emph{dominates} $G$ if all vertices are contained in the closed neighborhood of $S$, that is $N[S]=V$. The minimum cardinality over all such $S$ is called the domination number, written $\gamma(G)$. In 1963, V.G. Vizing conjectured that $\gamma(G \square H) \geq \gamma(G)\gamma(H)$ where $\square$ stands for the Cartesian product of graphs. In this note, we define classes of graphs $\A_n$, for $n\geq 0$, so that every graph belongs to some such class, and $\A_0$ corresponds to class $A$ of Bartsalkin and German. We prove that for any graph $G$ in class $\A_1$, $\gamma(G\square H)\geq \left(\gamma(G)-\sqrt{\gamma(G)}\right)\gamma(H)$.
\\[\baselineskip] 2010 Mathematics Subject
      Classification: 05C69
\\[\baselineskip]
      Keywords: Domination number, Cartesian product of graphs, Vizing's conjecture
\end {abstract}

 \section{Introduction}
 For basic graph theoretic notation and definitions see Diestel~\cite{Diest}. All graphs $G(V,E)$ are finite, simple, connected, undirected graphs with vertex set $V$ and edge set $E$. We may refer to the vertex set and edge set of $G$ as $V(G)$ and $E(G)$, respectively.
 
 For any graph $G=(V,E)$, a subset $S\subseteq V$ \emph{dominates} $G$ if $N[S]=V(G)$. The minimum cardinality of $S \subseteq V$, so that $S$ dominates $G$ is called the \emph{domination number} of $G$ and is denoted $\gamma(G)$. We call a dominating set that realizes the domination number a $\gamma$-set.

 \begin{defn}
 The \emph{Cartesian product} of two graphs $G_1(V_1,E_1)$ and $G_2(V_2,E_2)$, denoted by $G_1 \square G_2$, is a graph with vertex set $V_1 \times V_2$ and edge set $E(G_1 \square G_2) = \{((u_1,v_1),(u_2,v_2)) : v_1=v_2 \mbox{ and } (u_1,u_2) \in E_1, \mbox{ or } u_1 = u_2 \mbox{ and } (v_1,v_2) \in E_2\}$.
\end{defn}

For a vertex $h\in V(H)$, the $G$-fiber, $G^h$, is the subgraph of $G\square H$ induced by $\{(g,h):g\in V(G)\}$. Similarly, for a vertex $g\in V(G)$, the $H$-fiber, $H^g$, is the subgraph of $G\square H$ induced by $\{(g,h):h\in V(H)\}$.

Perhaps the most popular and elusive conjecture about the domination of graphs is due to Vadim G. Vizing (1963) \cite{Vizing}, which states
\begin{align}
\gamma(G \square H) \geq \gamma(G)\gamma(H).\label{V}
\end{align}

To read more about past attacks on the conjecture, and which graphs are known to satisfy its statement, see the survey \cite{BDGHHKR}.

One of the earliest significant results is that of Bartsalkin and German \cite{BG}, who showed that the conjecture holds for decomposable graphs, that is, graphs $G$ with vertex sets which can be disjointly covered by $\gamma(G)$ cliques, as well as all spanning subgraphs of decomposable graphs with the same domination number. Bartsalking and German called the family of such graphs class $A$. There are known examples of graphs not in class $A$, see for example \cite{BDGHHKR} page $5$, however, the examples in the literature satisfy the property that if we add the maximum number of edges to such graphs without changing the domination number, the clique number is one more than the domination number of the resulting graph. This gives motivation to consider such graphs for Vizing's conjecture.

Furthermore, it is interesting to generalize to the class of decomposable graphs to those with clique number exceeding the domination number by some fixed amount, since every graph falls into some such class. By producing bounds on the domination numbers of cartesian products of graphs where one is in such a class, we could hope to produce a better bound for all graphs. 

The best current bound for the conjectured inequality was shown in 2010 by Suen and Tarr \cite{ST}, 
\[\gamma(G \square H) \geq \frac{1}{2}\gamma(G)\gamma(H)+\frac{1}{2}\min\{\gamma(G),\gamma(H)\}.\]

In this note, we extend the technique of Bartsalkin and German, defining classes of graphs $\A_n$ for $n\geq 0$, and show that for any $G$ in class $\A_1$, \[\gamma(G\square H)\geq \left(\gamma(G)-\sqrt{\gamma(G)}\right)\gamma(H)\]

Although graphs in classes $\A_n$ for $n>1$ are not well understood, Douglas Rall has produced examples for $\A_{2n-4}$ for any $n\geq 2$ (personal communications).



We adhere closely to the notation of \cite{BDGHHKR}.

\section{Extending the Argument of Bartsalkin-German}

\subsection{Concepts and Consequences}

Given a graph $G$, we say that $G$ \emph{satisfies Vizing's conjecture} if for any graph $H$, \eqref{V} holds.
%
%

The clique covering number $\theta(G)$ is the minimum number $k$ of sets in a partition $\mathcal{C}=\{V_1 \cup \dots \cup V_k\}$ of $V(G)$ such that each induced subgraph $G[V_i]$ is complete.

The following is a recursive definition of the class of graphs $\D_n$. Let $\D_0$ be the set of decomposable graphs of Bartsalkin and German \cite{BG}, that is, those graphs $G$ so that $\theta(G)=\gamma(G)$.

\begin{defn}\label{Dn}
For any positive integer $n$ let $\D_n$ be the class of graphs $G$ such that $\theta(G)= \gamma(G)+n$ and $G$ is not the spanning subgraph of any graph $H\in \D_m$ for $0\leq m <n$ such that $\gamma(G)=\gamma(H)$.
\end{defn}

\begin{defn}\label{An}
For any non-negative integer $n$, let $\A_n$ be the class of graphs $G$, such that $G$ is a spanning subgraph of some graph $H\in \D_n$ so that $\gamma(G)=\gamma(H)$.
\end{defn}

Thus, Bartsalkin and German showed that graphs in class $\A_0$ satisfy Vizing's conjecture.

A known example \cite{BDGHHKR} of a graph not in $\A_0$ is $K_{6,6}$ with the edges of $3$ vertex-disjoint $4$-cycles removed. It is not difficult to check that it is in $\A_1$.

The following is another example \cite{BDGHHKR} of a graph not in $\A_0$ and in $\A_1$.

\pagebreak

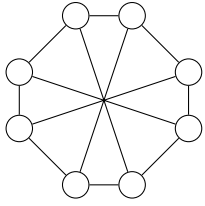
\begin{figure}[ht]
\begin{center}
\begin{tikzpicture}[scale=.75]
 
	 \tikzstyle{vertex}=[circle,draw, minimum size=10pt,inner sep=0pt]
	 \tikzstyle{selected vertex} = [vertex, fill=red!24]
	 \tikzstyle{selected edge} = [draw,line width=5pt,-,red!50]
	 \tikzstyle{edge} = [draw,thick,-,black]

	\node[vertex](u1) at (1,1)[]{};
	\node[vertex](u2) at (2,1)[]{};
	\node[vertex](u3) at (3,2)[]{};
	\node[vertex](u4) at (3,3)[]{};
	\node[vertex](u5) at (2,4)[]{};
	\node[vertex](u6) at (1,4)[]{};
	\node[vertex](u7) at (0,3)[]{};
	\node[vertex](u8) at (0,2)[]{};

          \draw[color=black] 
          (u1)--(u2)--(u3)--(u4)--(u5)--(u6)--(u7)--(u8)--(u1) (u1)--(u5) (u2)--(u6) (u3)--(u7) (u4)--(u8);

\end{tikzpicture}
\end{center}
\caption{a known graph not in $\A_0$ }
\end{figure}

The next two observations are generalized from \cite{BG}.

\begin{lem}\label{tool}
For a chosen non-negative integer $n$, let $G$ be a graph in class $\D_n$ with $\gamma(G)=k$ and $\mathcal{C}=\{C_1,\dots, C_{k+n}\}$ the clique partition of $V(G)$. For any non-negative integer $l<k+n$ and $C_{i_1},\dots, C_{i_l}\in \mathcal{C}$, let $D$ be a smallest set of vertices from $V(G)-(C_{i_1}\cup\dots\cup C_{i_l})$ that dominates $C_{i_1}\cup\dots\cup C_{i_l}$. Let $C_{j_1},\dots, C_{j_t}$ be the cliques from $\mathcal{C}$ that have a non-empty intersection with $D$. Then
\begin{align}
\sum_{m=1}^t(|C_{j_m}\cap D|-1)\geq l-n \label{CK}
\end{align}
\end{lem}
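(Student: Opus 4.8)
The plan is to extend the set $D$ to a dominating set of all of $G$ and then compare its size with $\gamma(G)=k$. The first step is to record the structural facts forced by the definitions. Since $D\subseteq V(G)-(C_{i_1}\cup\dots\cup C_{i_l})$, no clique $C_{i_s}$ meets $D$; hence the cliques $C_{j_1},\dots,C_{j_t}$ that do meet $D$ are distinct from $C_{i_1},\dots,C_{i_l}$. Moreover, by the very definition of the $C_{j_m}$, these are the \emph{only} members of $\mathcal{C}$ meeting $D$, so $D\subseteq C_{j_1}\cup\dots\cup C_{j_t}$. Therefore $\mathcal{C}$ breaks into three disjoint blocks: the $l$ cliques $C_{i_1},\dots,C_{i_l}$, the $t$ cliques $C_{j_1},\dots,C_{j_t}$, and $k+n-l-t$ further cliques; the last count is nonnegative because the cliques $C_{j_m}$ lie among the $k+n-l$ members of $\mathcal{C}$ other than $C_{i_1},\dots,C_{i_l}$.

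Next I would construct the dominating set. By hypothesis $D$ dominates $C_{i_1}\cup\dots\cup C_{i_l}$. Each $C_{j_m}$ is a clique containing a vertex of $D$, so $C_{j_m}\subseteq N[D]$ as well; thus $D$ already dominates everything except the $k+n-l-t$ leftover cliques. Choosing one vertex from each leftover clique — which dominates that clique since it is complete — and adjoining these vertices to $D$ produces a set $D'$ with $N[D']=V(G)$ and $|D'|\le |D|+(k+n-l-t)$.

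Finally, $\gamma(G)=k$ gives $k\le|D'|\le|D|+k+n-l-t$, so $|D|-t\ge l-n$. Since $D\subseteq C_{j_1}\cup\dots\cup C_{j_t}$, we have $|D|=\sum_{m=1}^t|C_{j_m}\cap D|$, whence $|D|-t=\sum_{m=1}^t(|C_{j_m}\cap D|-1)$, which is exactly \eqref{CK}.

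I do not anticipate a real obstacle here; the argument is essentially bookkeeping. The points that need a little care are verifying that the three families of cliques genuinely partition $\mathcal{C}$, noting that $D$ is contained in $C_{j_1}\cup\dots\cup C_{j_t}$ (so that the final rewriting of $|D|-t$ is legitimate), and using that a single vertex dominates any clique containing it. It is also worth observing that minimality of $D$ plays no role in the estimate: the stated inequality holds for every set outside $C_{i_1}\cup\dots\cup C_{i_l}$ that dominates it, and taking $D$ smallest merely makes the hypothesis concrete.
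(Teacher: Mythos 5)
Your argument is correct and is precisely the counting argument the paper compresses into its one-line appeal to the pigeonhole principle: extend $D$ by one vertex per untouched clique, compare with $\gamma(G)=k$, and rewrite $|D|-t$ as $\sum_{m=1}^t(|C_{j_m}\cap D|-1)$. Your observation that minimality of $D$ is not actually needed is also accurate.
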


\begin{proof}
The proof is a simple application of the pigeonhole principle.
\end{proof}

\begin{prop}\label{spanning}
Let $G$ be a graph so that for any $H$, $\gamma(G\square H)\geq f(\gamma(G),\gamma(H))$ for some function $f$ of $\gamma(G), \gamma(H)$. For any spanning subgraph $G'$of $G$ such that $\gamma(G')=\gamma(G)$, $\gamma(G'\square H)\geq f(\gamma(G'),\gamma(H))$.
\end{prop}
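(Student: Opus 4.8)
The plan is to exploit two elementary facts: that the Cartesian product respects the spanning-subgraph relation, and that deleting edges cannot decrease the domination number. Together these reduce the proposition to a one-line substitution using the hypothesis $\gamma(G')=\gamma(G)$.

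First I would check that $G'\square H$ is a spanning subgraph of $G\square H$. Since $G'$ is a spanning subgraph of $G$, we have $V(G')=V(G)$, so $V(G'\square H)=V(G')\times V(H)=V(G)\times V(H)=V(G\square H)$. For the edges, every edge of $G'\square H$ of the form $((u_1,v),(u_2,v))$ has $(u_1,u_2)\in E(G')\subseteq E(G)$, hence is an edge of $G\square H$; and every edge of the form $((u,v_1),(u,v_2))$ has $(v_1,v_2)\in E(H)$, hence is an edge of both products. Thus $E(G'\square H)\subseteq E(G\square H)$ on the common vertex set. Next I would invoke the fact that if $F$ is a spanning subgraph of a graph $F^{+}$, then $\gamma(F^{+})\leq\gamma(F)$: any set dominating $F$ also dominates $F^{+}$ because the closed neighborhood of each vertex in $F^{+}$ contains its closed neighborhood in $F$. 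Applying this with $F=G'\square H$ and $F^{+}=G\square H$ gives $\gamma(G'\square H)\geq\gamma(G\square H)$.

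Finally I would chain these with the hypothesis to obtain $\gamma(G'\square H)\geq\gamma(G\square H)\geq f(\gamma(G),\gamma(H))$, and then use $\gamma(G')=\gamma(G)$ to rewrite the right-hand side as $f(\gamma(G'),\gamma(H))$, which is the desired conclusion. There is no real obstacle here; the only point requiring care is that the hypothesis $\gamma(G')=\gamma(G)$ is essential, since for a general spanning subgraph one may have $\gamma(G')>\gamma(G)$, in which case $f(\gamma(G'),\gamma(H))$ need not be bounded by $f(\gamma(G),\gamma(H))$ and the argument breaks down. This is exactly the condition built into the definitions of $\D_n$ and $\A_n$, and it is what allows the bound to transfer to spanning subgraphs without any monotonicity assumption on $f$.
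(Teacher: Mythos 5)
Your proof is correct and follows exactly the same route as the paper's: observe that $G'\square H$ is a spanning subgraph of $G\square H$, so $\gamma(G'\square H)\geq\gamma(G\square H)\geq f(\gamma(G),\gamma(H))=f(\gamma(G'),\gamma(H))$. You simply spell out the verification that the Cartesian product preserves the spanning-subgraph relation, which the paper leaves implicit.
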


\begin{proof}
Notice that since $G'\square H$ is an spanning subgraph of $G\square H$, $\gamma(G'\square H)\geq \gamma(G\square H)$. Furthermore, $f(\gamma(G),\gamma(H))=f(\gamma(G'),\gamma(H))$.
\end{proof}

We now introduce the main concept which allows us to work within the classes.

\begin{defn}
For any clique partition $\mathcal{C}=\{C_1, \dots, C_m\}$ of $V(G)$, if some vertex $v\in V(G)$, dominates $j$ cliques of the partition, for some $j$, $1\leq j \leq m$, then we say that $v$ is \emph{$j$-restraining}. 
\end{defn}

\begin{defn}\label{rest}
For any integers $l,m,$ clique partition $\mathcal{C}=\{C_1, \dots, C_m\}$ of $V(G)$, and $C_{i_1},\dots, C_{i_l}\in \mathcal{C}$, suppose a set of vertices $D\subseteq V(G)-(C_{i_1}\cup\dots\cup C_{i_l})$ dominates $C_{i_1},\dots, C_{i_l}$. If $C_{j_1}, \dots, C_{j_t}$ are the cliques from $\mathcal{C}$ that have a non-empty intersection with $D$, then we say $D$ is \emph{$(\left|D\right|,l+t)$-restraining}. We say that $l+t$ is the \emph{restraint} of $D$ and $l+t-\left|D\right|$ the \emph{excess} of $D$.
\end{defn}

Notice that a vertex which is $j$-restraining is also $(1,j)$-restraining.

\medskip

The next lemma describes how the sum of restraint of a graph in class $\D_n$ is limited by $n$.

\begin{lem}\label{restraining}
For any non-negative integer $n$, let $G$ be a graph in class $\D_n$ with $\gamma(G)=k$ and $\mathcal{C}=\{C_1,\dots, C_{k+n}\}$ the clique partition of $V(G)$. Suppose for some non-negative integers $l$ and $t$, that $D$ is a $(\left|D\right|, l+t)$-restraining set, dominating $\mathcal{C_D}=\{C_{i_1},\dots, C_{i_l}, C_{j_1}, \dots, C_{j_t}\}$ as in Definition \ref{rest}. Then $G- \mathcal{C_D}$ cannot contain a set $E$ of vertices which is $(\left|E\right|, \left|D\right|+\left|E\right|+n-(l+t)+1)$-restraining.
\end{lem}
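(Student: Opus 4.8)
The plan is to argue by contradiction, merging $D$ with the hypothetical set $E$ and showing that the resulting set is ``too efficient'': it would produce a dominating set of $G$ of size smaller than $\gamma(G)=k$. The underlying principle is the pigeonhole argument behind Lemma~\ref{tool}. If $X\subseteq V(G)$ is disjoint from cliques $A_1,\dots,A_p\in\mathcal{C}$ that it dominates and meets cliques $B_1,\dots,B_q\in\mathcal{C}$, then the union of $X$ with one vertex chosen from each of the remaining $(k+n)-p-q$ cliques of $\mathcal{C}$ is a dominating set of $G$, so $k=\gamma(G)\le |X|+(k+n)-(p+q)$, that is, $|X|\ge (p+q)-n$. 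In the language of Definition~\ref{rest}, every restraining set in $G$ has excess at most $n$; what the lemma says is that once this ``excess budget'' $n$ has partly been consumed by $D$, a disjoint set $E$ living in the part of $G$ left uncovered cannot have excess large enough to push the combined total beyond $n$.

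So suppose, for contradiction, that $G-\mathcal{C}_D$ contains a set $E$ that is $(|E|,\,|D|+|E|+n-(l+t)+1)$-restraining, where the restraining structure is taken relative to the restricted clique partition $\mathcal{C}'=\mathcal{C}\setminus\mathcal{C}_D$ of $G-\mathcal{C}_D$. Put $r=|D|+|E|+n-(l+t)+1$ for the restraint of $E$, so that $E$ is disjoint from and dominates some cliques of $\mathcal{C}'$ and meets others, $r$ of them in all. I would first record two bookkeeping facts. First, $D$ and $E$ are vertex-disjoint: by Definition~\ref{rest} every vertex of $D$ lies in one of the cliques $C_{j_1},\dots,C_{j_t}$, so $D\subseteq\bigcup\mathcal{C}_D$, whereas $E\subseteq V(G)-\bigcup\mathcal{C}_D$; hence $|D\cup E|=|D|+|E|$. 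Second, the $l+t$ cliques dominated by $D$ (namely the members of $\mathcal{C}_D$) and the $r$ cliques dominated by $E$ are pairwise distinct, since the former lie in $\mathcal{C}_D$ and the latter in $\mathcal{C}\setminus\mathcal{C}_D$.

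Therefore $D\cup E$ dominates at least $(l+t)+r$ distinct cliques of $\mathcal{C}$, so at most $(k+n)-(l+t)-r$ cliques of $\mathcal{C}$ are left undominated by $D\cup E$. Adjoining to $D\cup E$ one vertex from each such undominated clique yields a dominating set of $G$ of size at most $|D|+|E|+\big((k+n)-(l+t)-r\big)$, which, after substituting the value of $r$, equals $k-1$. This contradicts $\gamma(G)=k$, and so no such $E$ can exist.

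The substitution and the check that the adjoined set dominates $G$ (immediate: each clique of $\mathcal{C}$ is either dominated by $D\cup E$ or contributes one of the adjoined vertices) are routine. The one delicate point is the second bookkeeping fact, together with the convention that ``restraining in $G-\mathcal{C}_D$'' refers to the partition $\mathcal{C}\setminus\mathcal{C}_D$: this is precisely what keeps the cliques charged to $D$ separate from those charged to $E$, and hence what allows their excesses to add. I expect verifying this disjointness, rather than any estimate, to be the part that most needs care.
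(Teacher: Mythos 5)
Your proof is correct and follows essentially the same route as the paper: assume $E$ exists, note that $D\cup E$ dominates at least $(l+t)+r$ cliques where $r=|D|+|E|+n-(l+t)+1$, adjoin one vertex from each undominated clique, and compute that the resulting dominating set has size at most $k-1$, contradicting $\gamma(G)=k$. The extra care you take with the disjointness of $D$ and $E$ and of their associated clique families is a worthwhile elaboration of details the paper leaves implicit, but it is the same argument.
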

\begin{proof}
If we suppose to the contrary, then $D \cup E$ is a set of vertices dominating $l+t+\left|D\right|+\left|E\right|+n-(l+t)+1=\left|D\right|+\left|E\right|+n+1$ cliques of the partition. We count $\left|D \cup E\right|$ and one vertex from each undominated clique and find a dominating set of $G$ with size at most $k-1$, which is a contradiction.
\end{proof}

\begin{defn}\label{fiber}
For a vertex $h\in V(H)$, the \emph{$G$-fiber}, $G^h$, is the subgraph of $G\square H$ induced by $\{(g,h):g\in V(G)\}$. Similarly, for a vertex $g\in V(G)$, the $H$-fiber, $H^g$, is the subgraph of $G\square H$ induced by $\{(g,h):h\in V(H)\}$.
\end{defn}

\begin{defn}\label{cell}
For any minimum dominating set $D$ of $G\square H$ let $C_1,\dots, C_{k+n}$ be a clique partition of $V(G)$. For every $h\in V(H)$ and $i,\, 1\leq i \leq k+n$, we call $C_i^h=C_i\times \{h\}$ a \emph{G-cell}.
\end{defn}

\begin{defn}\label{missing}
If $D\cap(C_i\times N[h])$ is empty, call such $C_i^h$ a \emph{missing G-cell} for $h$. 
\end{defn}

Notice that every missing $G$-cell for $h$ is dominated ``horizontally", in $G^h$. We often write $C_{i_1}^h,\dots, C_{i_l}^h$ as the missing $G$-cells for $h$ with vertices dominated from $C_{j_1}^h,\dots, C_{j_t}^h$. 

\medskip

For a clique partition of $G$ and minimum dominating set $D$ of $G\square H$, we define a labeling of vertices in $D$, similar to that of \cite{BG} which we call the {\bf\emph{simple labeling}}:

For $G\in \D_n, \gamma(G)=k, 1\leq i \leq k+n$, and $h \in V(H)$, if $D \cap C_i^h$ is non-empty, we label all of those vertices by $i$. Choose any vertex $h \in V(H)$. If there exist vertices in $D\cap(C_i\times N[h])$, then one of them received the label $i$. Notice that projecting all vertices labeled $i$ onto $H$ produces a vertex-labeling where $h$ is adjacent to a vertex labeled $i$.

\subsection{The Argument}
For our main result, our reasoning can be divided into two counting arguments which we call the \emph{Undercount Argument} and the \emph{Overcount Argument}. As in the method of Bartsalkin and German, we label vertices of the minimum dominating set $D$ of $G \square H$ by the label of the clique containing their projection onto $G$. In the undercount argument, we remove some of these labels from all vertices of $D$, which allows us to relabel them. In the overcount argument, for certain fibers $G^h$, we assign multiple labels to one vertex of $D$ in each fiber, and later remove the resulting overcount.

\begin{thm}\label{Viz}
For any graphs $G\in \A_1$ and any $H$,
\[\gamma(G\square H)\geq \left(\gamma(G)-\sqrt{\gamma(G)}\right)\gamma(H)\]
\end{thm}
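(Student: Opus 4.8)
The plan is to reduce to the case $G\in\D_1$ and then run a refined Bartsalkin--German counting argument, using the restraint calculus of Section~2 to control the missing $G$-cells.

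\textbf{Reduction.} Since the target function $f(\gamma(G),\gamma(H)):=\bigl(\gamma(G)-\sqrt{\gamma(G)}\bigr)\gamma(H)$ depends on $G$ only through $\gamma(G)$, Proposition~\ref{spanning} reduces the statement to the case $G\in\D_1$. So I would fix $G\in\D_1$ with $\gamma(G)=k\ge 2$ and a clique partition $\mathcal C=\{C_1,\dots,C_{k+1}\}$ of $V(G)$; fix an arbitrary $H$, write $r=\gamma(H)$, and let $D$ be a minimum dominating set of $G\square H$. The goal is $|D|\ge (k-\sqrt k)\,r$.

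\textbf{The basic count.} Apply the simple labeling. For a label $i$ set $D_i=D\cap(C_i\times V(H))$, $P_i=\pi_H(D_i)$, and $B_i=\{h\in V(H):C_i^h\text{ is a missing }G\text{-cell for }h\}$. If $C_i^h$ is not missing then $D$ meets $C_i\times N_H[h]$, so $h\in N_H[P_i]$; hence $P_i\cup B_i$ dominates $H$, giving $|D_i|\ge|P_i|\ge r-|B_i|$. Summing over the $k+1$ labels,
\[
|D|\ \ge\ \sum_{i=1}^{k+1}|D_i|\ \ge\ (k+1)\,r-\sum_{i=1}^{k+1}|B_i|\ =\ (k+1)\,r-M,\qquad M:=\sum_{h\in V(H)}m(h),
\]
where $m(h)$ is the number of missing $G$-cells for $h$. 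Thus it suffices to prove $M\le (1+\sqrt k)\,r$.

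\textbf{Bounding $M$ via undercount and overcount.} Here I would use $G\in\D_1$. By Lemma~\ref{restraining} (with $n=1$) every restraining set in $G$ has excess at most $1$; in particular no vertex is $3$-restraining. By Lemma~\ref{tool} (with $n=1$ and $l=m(h)$), for each fiber $G^h$ the vertices of $D\cap G^h$ used to dominate the $m(h)$ missing cells occupy their cliques with total redundancy at least $m(h)-1$, so $|D\cap G^h|\ge m(h)$ and at least $m(h)-1$ of these dominators are redundant for the simple labeling in $G^h$. The undercount step then removes, from every vertex of $D$, the labels of a well-chosen family of $\Theta(\sqrt k)$ cliques; in each fiber with missing cells the freed redundant dominators are re-labeled with the indices of the now-uncovered cells, and the overcount step assigns up to $\Theta(\sqrt k)$ labels to a single vertex of $D$ in each exceptional fiber, so that after re-labeling every surviving label $i$ still has $P_i$ dominating all of $H$ outside a controlled remainder. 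Re-running the basic count on the re-labeled configuration, and balancing the number of deleted cliques against the per-fiber overcount, should yield $M\le(1+\sqrt k)\,r$, and with it the theorem.

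\textbf{The main obstacle.} The crux is exactly this balancing. Without it the only bound available is $M\le|D|$ -- a single fiber may contain as many as $k+1$ missing cells -- which merely recovers the Suen--Tarr-type estimate $|D|\ge\frac{k+1}{2}r$. Extracting the factor $\sqrt k$ requires choosing the deleted family of cliques so that each surviving clique is, in an averaged sense, expensive to dominate from outside (this is where the excess-at-most-$1$ constraint of Lemma~\ref{restraining} limits how much the ``missing'' phenomenon can concentrate), and then verifying that the re-labeling stays globally consistent -- i.e.\ that the new projections $P_i$ really do dominate $H$, which is a statement about neighborhoods in $H$ rather than about individual fibers. I expect this optimization, together with the cross-fiber bookkeeping needed to keep the re-labeling coherent across fibers adjacent in $H$, to be the technical heart of the argument.
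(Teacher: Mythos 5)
Your setup (reduction to $\D_1$ via Proposition~\ref{spanning}, the fiber/cell bookkeeping, the ``basic count'' $|D|\geq(k+1)\gamma(H)-M$) is sound and consistent with the paper, but the proof stops exactly where the paper's argument begins: the entire content of the theorem is in the two counting arguments you defer to ``should yield'' and ``I expect,'' and the key idea that makes them work is absent. The paper does not bound the total number of missing cells $M$; instead it conditions on a single invariant of $G$ itself, namely the minimum restraint $r+1$ over all excess-one restraining sets, and proves two complementary bounds in terms of this $r$. First (undercount): deleting the $r+1$ labels of the cliques dominated by a minimum restraining set leaves, by Lemma~\ref{restraining}, a subsystem of cliques with excess $0$, on which the original Bartsalkin--German relabeling applies verbatim, so each of the remaining $k-r$ labels projects to a dominating set of $H$ and $|D|\geq(k-r)\gamma(H)$. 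Second (overcount): in any fiber $G^h$ that needs a doubly-labeled vertex, the set $D\cap V(G^h)$ projects to an excess-one restraining set of $G$, which by minimality has restraint at least $r+1$ and hence size at least $r$; this gives $(k+1)\gamma(H)\leq\frac{r+1}{r}|D|$. One bound decreases and the other increases in $r$, so taking the worst case over $r$ and balancing at $r\approx\sqrt{k}-1$ yields $k-\sqrt{k}$. Your sketch never identifies this parameter $r$ as a property of $G$ (rather than of $D$ or of individual fibers), which is precisely what makes the two estimates commensurable and the optimization legitimate.

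Separately, your proposed intermediate target $M\leq(1+\sqrt{k})\gamma(H)$ is not established anywhere in your argument and is not what the paper proves; the only bound your tools give is $M\leq|D|$, which, as you note, recovers only a $\tfrac{k+1}{2}\gamma(H)$-type estimate. The paper's undercount argument is strictly stronger than a count of missing cells: it shows each missing cell is paid for by a redundant dominator in the same fiber (Lemma~\ref{tool} with $n=0$ after the label deletion), so the relabeled projections genuinely dominate $H$. Until you either prove your bound on $M$ or replace it with the restraint-based case analysis, the proof has a genuine gap at its technical core.
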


\begin{proof}
Suppose $G\in \D_1$ with $\gamma(G)=k$. For any graph $H$, and a minimum dominating set $D$ of $G\square H$ let $C_1,\dots, C_{k+1}$ be a clique partition of $V(G)$. 
\\[\baselineskip]
\emph{Undercount Argument:}
\\[\baselineskip]
Suppose $G$ has minimum restraint $2$ and contains a $2$-restraining vertex $v$, and without loss of generality, suppose $v\in C_1$ with $C_1\cup C_2 \subseteq N[v]$. 


By Lemma \ref{restraining}, $C_3\cup\dots\cup C_{k+1}$ cannot contain a set of vertices $E$ which is \\* $(\left|E\right|, \left|E\right|+1)$-restraining. Thus, the induced subgraph on $C_3\cup \dots \cup C_{k+1}$ satisfies formula \eqref{CK} with $n=0$.

We consider only missing cells in $\cup_{i=3}^{k+1}C_i^h$. If $C_{j_1}^h,\dots, C_{j_t}^h \subseteq \cup_{i=3}^{k+1}C_i$, then applying Lemma \ref{tool} with $n=0$ we see that there are at most $l$ vertices in $D \cap V(G^h)$ with duplicated labels held by vertices of $D$ in the same cells. We relabel these vertices by assigning a label of a distinct corresponding missing cell. That is, at most $l$ vertices with duplicated labels receive labels $i_1, \dots, i_l$. 

If $C_1^h$ or $C_2^h$ are members of $\{C_{j_1}^h,\dots, C_{j_t}^h\}$, then applying Lemma \ref{tool},
\[\sum_{m=1}^t(|C_{j_m}^h\cap D|-1)\geq l-1.\] 
Thus, for every vertex $h\in V(H)$ and missing $G$-cells for $h$, $C_{i_1}^h,\dots, C_{i_l}^h$, there are $l-1$ vertices in $D\cap V(G^h)$ which have duplicated labels held by other vertices of $D$ in the same cell. By assumption, some of the dominating vertices of $C_{i_1}^h,\dots, C_{i_l}^h$ are from $C_1^h$ or $C_2^h$, and we remove the labels on those vertices; that is we remove the labels $1$ or $2$ from vertices of $D$ in $C_1^h$ and $C_2^h$. This produces at least $l$ vertices in $D \cap V(G^h)$ with duplicated labels held by other vertices of $D$ in the same cells. Now, for every missing cell in $G^h$, there are enough such duplicated labeled vertices so that every such vertex can be relabeled and receive a label of a distinct missing cell. That is, all missing cells are covered. Projecting all vertices with a given label greater than $2$ onto $H$ produces a dominating set of $H$, of size at least $\gamma(H)$. Summing over all labels, we count $(\gamma(G)-1)\gamma(H)$ vertices of $D$.

We repeat the argument assuming that $G$ contains a restraining set $S$, $\left|S\right|=r$, for some $r\in \{2, \dots, \gamma(G)\}$ which is $(r,r+1)$-restraining where $r+1$ is the minimum restraint of $G$. Without loss of generality, assume that $N[S]\supseteq C_1\cup \dots \cup C_{r+1}$.

By Lemma \ref{restraining}, $C_{r+2}\cup\dots\cup C_{k+1}$ cannot contain a set of vertices $E$ which is \\* $(\left|E\right|, \left|E\right|+1)$-restraining. Thus, the induced subgraph on $C_{r+2}\cup \dots \cup C_{k+1}$ satisfies formula \eqref{CK} with $n=0$.

For $1\leq i \leq k+1$ and $h \in V(H)$, if $D \cap C_i^h$ is non-empty, we label one of those vertices by $i$.

Let $C_{i_1}^h,\dots, C_{i_l}^h$ be the missing $G$-cells for $h$ with vertices dominated from \linebreak $C_{j_1}^h,\dots, C_{j_t}^h$. 

We consider only missing cells in $\cup_{i=r+2}^{k+1}C_i^h$. If $C_{j_1}^h,\dots, C_{j_t}^h \subseteq \cup_{i=r+2}^{k+1}C_i$, then applying Lemma \ref{tool} with $n=0$ we see that there are at most $l$ vertices in $D \cap V(G^h)$ with duplicated labels held by vertices of $D$ in the same cell. We relabel these vertices by assigning a label of a distinct corresponding missing cell. That is, at most $l$ unlabeled vertices receive labels $i_1, \dots, i_l$ and all missing cells are covered.

If any of $C_1^h, C_2^h, \dots, C_{r+1}^h$ are members of $\{C_{j_1}^h,\dots, C_{j_t}^h\}$, then applying Lemma \ref{tool},
\[\sum_{m=1}^t(|C_{j_m}^h\cap D|-1)\geq l-1.\] 

 By assumption, some of the dominating vertices of $C_{i_1}^h,\dots, C_{i_l}^h$ are from $C_1^h \cup C_2^h, \dots, \cup C_{r+1}^h$, and we remove the labels on those vertices; that is we remove the labels $1,2,\dots, r+1$ from vertices of $D$ in $C_1^h, \dots, C_{r+1}^h$. This produces at least $l$ vertices in $D \cap V(G^h)$ with duplicated labels held by vertices of $D$ in the same cells. Now, for every missing cell in $G^h$, there are enough such vertices with duplicated labels so that every such vertex can receive a label of a distinct missing cell. Projecting all vertices with a given label greater than $r+1$ onto $H$ produces a dominating set of $H$, of size at least $\gamma(H)$. Summing over all labels, we count 
\begin{align}
(\gamma(G)-r)\gamma(H)\label{form1}
\end{align}
 vertices of $D$.
\\[\baselineskip]
\emph{Overcount Argument:}
\\[\baselineskip]
Next we condition on the minimum restraint of a vertex set in $G$ with excess $1$. Suppose $G$ has minimum restraint $r+1$ for some $1\leq r \leq \gamma(G)$, and let $E$ be a $(r,r+1)$-restraining set of vertices with minimum restraint $r+1$. For any $h\in H$, if $G^h$ contains a missing cell, then  $\left|D\cap V(G^h)\right|\geq r$. If $D\cap V(G^h)$ dominates $C_{i_1}^h,\dots, C_{i_l}^h, C_{j_1}^h, \dots, C_{j_t}^t$ as in definition \ref{rest}, and $D\cap V(G^h)$ has non-zero excess, then we can label one vertex of $D\cap V(G^h)$ by two labels, say $i_1$ and $i_2$, and the rest of the vertices by one distinct label from $\{i_3, \dots, i_l, j_1, \dots, j_t\}$. Thus, in every $G$-fiber with a missing cell, there are at least $r$ vertices of $D$ and at most one such vertex receives two labels.

For any fixed label $i$, $1\leq i \leq k+1$, projecting the vertices of $D$ labeled $i$ onto $H$ produces a dominating set of $H$ which has size at least $\gamma(H)$. Summing over all the labels, we count $(\gamma(G)+1)\gamma(H)$ vertices of $D$. However, those vertices that received two labels are counted twice. Since in every $G$-fiber, if a vertex of $D$ was counted twice, there were at least $r-1$ vertices of $D$ that were counted once. We remove the overcount to conclude,
\begin{align}
\left(\gamma(G)+1\right)\gamma(H)\leq \frac{2}{r}\left|D\right|+\frac{r-1}{r}\left|D\right|=\frac{r+1}{r}\left|D\right|\label{form2}
\end{align}

Putting formulas \eqref{form1} and \eqref{form2} together, we obtain 
\begin{align}
\gamma(G\square H) \geq \min_{1\leq r \leq \gamma(G)}\Big{\{}\max\big{\{}(\gamma(G)-r)\gamma(H),\frac{r}{r+1}\left(\gamma(G)+1\right)\gamma(H)\big{\}}\Big{\}}
\end{align}
which can be minimized to show
\[\gamma(G\square H)\geq \left(\gamma(G)-\sqrt{\gamma(G)}\right)\gamma(H)\]

By Proposition \ref{spanning}, the above inequality holds for any graph in $\A_1$.

\end{proof}

The above argument does not immediately generalize to other classes since graphs in $\D_1$ have the property that for any $h\in H$, every $G$-fiber $G^h$ has either one or no missing cells. This is not true in other classes. For example, if $G\in \D_2$ we could have $G$-fibers with one missing cell, and the overcount argument would not apply.

However, by repeating the undercount argument when $G\in \D_n$ for any non-negative integer $n$, we obtain the same undercount result.

We say a $(r,r+n)$ restraining set $S$ of $G\in \A_n$ is a \emph{minimum restraining set} if $S$ has the minimum restraint over all restraining sets with excess $n$.

\begin{cor}
For any non-negative integer $n$, any graph $G\in \A_n$, and any graph $H$, if $G$ contains a minimum restraining set of size $r$, then $\gamma(G\square H) \geq (\gamma(G)-r)\gamma(H)$.
\end{cor}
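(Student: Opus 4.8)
The plan is to observe that the Corollary is exactly the \emph{Undercount Argument} from the proof of Theorem~\ref{Viz}, carried out in the general class $\D_n$ rather than only in $\D_1$, together with an application of Proposition~\ref{spanning} to pass from $\D_n$ to $\A_n$. So I would first reduce to the case $G\in\D_n$, since if $G\in\A_n$ is a spanning subgraph of $H'\in\D_n$ with $\gamma(G)=\gamma(H')$, and $H'$ contains a minimum restraining set of size $r$, then the bound $\gamma(H'\square H)\ge(\gamma(H')-r)\gamma(H)$ together with Proposition~\ref{spanning} (applied with $f(a,b)=(a-r)b$, noting $\gamma(G)=\gamma(H')$) gives the claim for $G$. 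One small point to address: a minimum restraining set of $G$ of size $r$ is also a restraining set (with the same or smaller restraint) in any supergraph with the same domination number, so the hypothesis transfers in the direction we need; I would state this explicitly.

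Next, with $G\in\D_n$, $\gamma(G)=k$, $\mathcal{C}=\{C_1,\dots,C_{k+n}\}$ a clique partition, and $S$ a minimum restraining set, say $(r,r+n)$-restraining with $N[S]\supseteq C_1\cup\dots\cup C_{r+n}$ (after relabeling), I would invoke Lemma~\ref{restraining} to conclude that the induced subgraph on $C_{r+n+1}\cup\dots\cup C_{k+n}$ contains no $(|E|,|E|+1)$-restraining set $E$, hence satisfies formula~\eqref{CK} with parameter $0$. Then, fixing a minimum dominating set $D$ of $G\square H$ and using the simple labeling, for each $h\in V(H)$ with missing $G$-cells $C_{i_1}^h,\dots,C_{i_l}^h$ dominated from $C_{j_1}^h,\dots,C_{j_t}^h$, I split into two cases exactly as in the theorem: if all the dominating cells lie among $C_{r+n+1},\dots,C_{k+n}$, Lemma~\ref{tool} with $n=0$ yields at least $l$ vertices of $D\cap V(G^h)$ carrying duplicated labels, enough to relabel and cover all missing cells; if some dominating cell is among $C_1^h,\dots,C_{r+n}^h$, then Lemma~\ref{tool} gives $\sum_{m=1}^t(|C_{j_m}^h\cap D|-1)\ge l-n$, and after deleting the labels $1,\dots,r+n$ from vertices of $D$ in $C_1^h,\dots,C_{r+n}^h$ we again produce at least $l$ duplicated-label vertices — here I must check the arithmetic: removing those labels frees up at least the deficit, and since $S$ witnesses that the relevant cells are dominated ``from below,'' the count works out to cover all $l$ missing cells. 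Finally, projecting the vertices bearing any fixed label greater than $r+n$ onto $H$ gives a dominating set of $H$, so each such label is used at least $\gamma(H)$ times; summing over the $k+n-(r+n)=k-r=\gamma(G)-r$ surviving labels counts at least $(\gamma(G)-r)\gamma(H)$ vertices of $D=\gamma(G\square H)$.

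I expect the routine but genuinely delicate step to be verifying that the relabeling in the second case is valid for general $n$, i.e.\ that after deleting the $r+n$ ``low'' labels the number of duplicated-label vertices in $G^h$ is at least the number $l$ of missing cells, so that every missing cell can be assigned a distinct relabeled vertex. This is the same bookkeeping that appears in the $\D_1$ case of Theorem~\ref{Viz}, where the inequality $\sum(|C_{j_m}^h\cap D|-1)\ge l-1$ becomes $\ge l$ after one label removal; in the general case the inequality from Lemma~\ref{tool} degrades by $n$ but we remove up to $r+n$ labels, and one must confirm that the dominating vertices whose labels we remove genuinely lie in cells counted by the left-hand sum — this is where the hypothesis that $S$ is a \emph{minimum} restraining set (so no smaller-restraint configuration exists to spoil the pigeonhole bound) is used. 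The main obstacle, then, is not a new idea but making precise that the $\D_1$ argument's final tallying step survives verbatim when the constant $1$ is replaced by $n$; once that is checked, the Corollary follows, and I would remark that, unlike the theorem, no Overcount Argument is available for $n\ge 2$, which is exactly why the conclusion here is conditional on the size $r$ of a minimum restraining set rather than an unconditional bound.
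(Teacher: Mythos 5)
Your overall plan --- reduce to $G\in\D_n$ via Proposition~\ref{spanning} and then rerun the undercount argument with $r+n$ ``low'' cliques in place of $r+1$ --- is exactly what the paper intends; the paper itself offers no more justification than ``by repeating the undercount argument \dots we obtain the same undercount result,'' and you have correctly located the one step that does not transcribe verbatim. However, you leave that step asserted rather than verified, and the mechanism you gesture at (minimality of the restraint of $S$) is not what closes it; as written the second case still has a gap for $n\ge 2$.

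Concretely: in the second case Lemma~\ref{tool}, applied to the whole graph in $\D_n$, gives $\sum_{m=1}^{t}\left(|C_{j_m}^h\cap D|-1\right)\ge l-n$, so the duplicated labels fall short of $l$ by up to $n$, while deleting the labels $1,\dots,r+n$ is only guaranteed to free one additional vertex per low clique that actually meets the dominators --- and the hypothesis ``some dominating vertex is low'' guarantees only one such clique. For $n=1$ this gives $l-1+1=l$ and the theorem's bookkeeping closes; for $n\ge 2$ the naive count gives only $l-n+t_{\mathrm{low}}$, which may be less than $l$. The fix is to redo the pigeonhole with $S$ adjoined rather than quoting Lemma~\ref{tool} as a black box: if $D'$ is the set of dominators of the $l$ missing (high) cells, meeting $t_{\mathrm{high}}$ high cliques and $t_{\mathrm{low}}$ low cliques, then $D'\cup S$ together with one vertex from each of the $(k+n)-l-t_{\mathrm{high}}-(r+n)$ remaining cliques dominates $G$, whence $|D'|+r+k-l-t_{\mathrm{high}}-r\ge k$, i.e.\ $|D'|\ge l+t_{\mathrm{high}}$. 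Equivalently, the duplicated labels in high cliques plus \emph{all} dominator vertices in low cliques number at least $l$, which is exactly what the relabeling needs. Note this uses only that $S$ is $(r,r+n)$-restraining (it has excess $n$), not that its restraint is minimum; minimality serves only to make $r$, and hence the bound $(\gamma(G)-r)\gamma(H)$, as strong as possible. With that computation inserted your argument is complete and coincides with the paper's intended proof.
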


Note that this bound is an improvement on the best current bound \cite{ST} for any graph $G$ with minimum restraint at most $\frac{1}{2}\gamma(G)-\frac{1}{2}$.

\section{Acknowledgements}

We would like to thank Bostjan Bre\v{s}ar and Douglas Rall for their patient and insightful comments.

 \bibliographystyle{plain}
 
 \end{document}